\let\phi\varphi
\def\Z{{\mathbb Z}}
\let\zet\Z
\let\en\N
\def\lcm{\mathop{\rm lcm}\nolimits}
\let\myarrow\overrightarrow 
\def\orC{\myarrow C}
\def\orD{\myarrow D}
\def\orK{\myarrow K}
\newcommand{\FF}[1][]{\mathrel{\xrightarrow{\ifx @#1@ FF\else FF_{#1} \fi}}}
\newcommand{\nFF}[1][]{\thickspace\not\negthickspace\xrightarrow{\ifx @#1@ FF\else FF_{#1} \fi}}
\newtheorem{theorem}{Theorem}[section]
\newtheorem{lemma}[theorem]{Lemma}
\newtheorem{corollary}[theorem]{Corollary}
\newtheorem{remark}[theorem]{Remark}
\newenvironment{proof}{\par\medskip\noindent{\bf Proof: }}
  {\unskip\hfill$\Box$\par\medskip}
\def\proofof #1{\noindent{\bf Proof of #1:}}
\let\qed\relax
\title{Flow-continuous mappings -- influence of the group\thanks{Partially supported by grant LL1201 ERC CZ of the Czech Ministry of Education, Youth and Sports.}} 
\author{
Jaroslav Nešetřil 
 \qquad
Robert Šámal \thanks{Partially supported by grant GA \v{C}R P201/10/P337.}
\\
\small
 Computer Science Institute, Charles University \\ 
\small
 email: {\tt \{nesetril,samal\}@iuuk.mff.cuni.cz} 
}
\begin{document}
\date{}
\maketitle

\begin{abstract}
Many questions at the core of graph theory can be formulated as 
questions about certain group-valued flows: examples are the cycle double cover 
conjecture, Berge-Fulkerson conjecture, and Tutte's 3-flow, 4-flow, and 5-flow 
conjectures. As an approach to these problems Jaeger and DeVos, Nešetřil, 
and Raspaud define a notion of graph morphisms continuous with respect to 
group-valued flows. We discuss the influence of the group on these 
maps. In particular, we prove that the number of flow-continuous mappings 
between two graphs does not depend on the group, but only on the 
largest order of an element of the group (i.e., on the exponent of the group). 
Further, there is a nice algebraic structure describing for which groups a mapping is flow-continuous. 

On the combinatorial side, we show that for cubic graphs the only relevant 
groups are $\Z_2$, $\Z_3$, and $\Z$. 
\end{abstract}

\section{Introduction} 
\label{sec:intro}

Throughout this paper $G$ and~$H$ will be digraphs (finite multidigraphs
with loops and parallel edges allowed), 
$f: E(G) \to E(H)$ a mapping, and $M$, $N$ abelian groups.

Recall that a mapping $\phi: E(G) \to M$ is a \emph{flow}
($M$-flow when we want to emphasize~$M$) when it satisfies 
the Kirchhoff's law at every vertex, that is, for every $v \in V(G)$ we have
$$
   \sum_{e \in E(G): \hbox{$e$ leaves $v$}} \phi(e) = \sum_{e \in E(G): \hbox{$e$ enters $v$}} \phi(e) \,.  
$$
The theory of flows on (di)graphs is a very rich one, but also full 
of longstanding conjectures (cycle double cover, Berge-Fulkerson conjecture, Tutte's 3-flow, 4-flow, and 5-flow conjectures, etc.), 
see \cite{seymour-handbook}, \cite{Diestel}, or \cite{zhang-book} for a detailed treatment of this area. 

In this paper we are going to study a notion introduced by 
Jaeger \cite{Jaeger} and by DeVos, Nešetřil, and Raspaud \cite{DNR}
as an approach to these problems.  

We say that a mapping $f:E(G) \to E(H)$ is $M$-flow continuous, if ``the preimage of every $M$-flow 
is an $M$-flow''. More precisely, for every $M$-flow $\phi$ on~$H$, the composition $\phi f$ 
(applying first $f$ then $\phi$) is an $M$-flow on~$G$. For short, we will call $M$-flow-continuous 
mappings just $FF_M$; 
in the important case $M=\Z_n$ we use the typographically nicer~$FF_n$ instead of~$FF_{\Z_n}$. 
We will write $G \FF[M] H$ to denote that there exists some $FF_M$ mapping from~$G$ to~$H$. 

The main reason for introducing this notion is Jaeger's conjecture \cite{Jaeger} that 
every bridgeless cubic graph~$G$ has a $\Z_2$-flow-continuous mapping to the Petersen graph. 
If true, this conjecture would imply the cycle double cover conjecture, and many others. 
In this paper we will study the notion of $M$-flow-continuous mappings per se, with the 
aim of making clear what the role of the group~$M$ is; this question has not been addressed in previous 
treatments. For $M=\Z_2$ we do not need to consider the orientation of edges, thus this part of the 
theory is relevant for undirected graphs. As our emphasis is on general abelian groups, we will 
mostly deal with digraphs. 

In some of our proofs we will use an alternative characterisation of $FF$-mappings, 
to state it we need to briefly introduce two notions. 
Given $\tau: E(G) \to M$ and $f: E(G) \to E(H)$, we 
denote by $\tau_f$ the \emph{algebraic image of~$\tau$}, i.e., 
the mapping $\tau_f: E(H) \to M$ defined by 
$$
  \tau_f(e) = \sum_{e' \in E(G); f(e')=e} \tau(e') \,. 
$$ 

A mapping $t: E(G) \to M$ is called an \emph{$M$-tension} if for every 
circuit~$C$ the sum of~$t$ over all clockwise edges is the same as the sum 
over all counterclockwise edges. It is not hard to see that $M$-tensions 
in a plane digraph~$G$ correspond to $M$-flows in the dual~$G^*$. More relevant for us 
is that for every digraph the vector spaces of all $M$-flows and of all $M$-tensions 
are orthogonal complements. (For this we need $M$~to be a ring. As we will be restricted 
on finitely generated abelian groups, i.e., on groups in the form~\eqref{eq:M}, this 
will not limit our use of the following lemma.) 
This allowed DeVos, Nešetřil, and Raspaud~\cite[Theorem~3.1]{DNR} to prove the following useful result. 

\begin{lemma}\label{altdef}
  Let $f: E(G) \to E(H)$ be a mapping, let $M$ be a \emph{ring}. 
  Mapping $f$ is $FF_M$ if and only if for every $M$-tension~$\tau$ on~$G$, 
  its algebraic image~$\tau_f$ is an~$M$-tension on~$H$. 

  Moreover, it is sufficient to verify the condition for all tensions 
  that are nonzero only on a neighborhood of a single vertex. 
\end{lemma}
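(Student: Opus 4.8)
The plan is to deduce both parts from the stated orthogonality of flows and tensions together with an adjunction between the two operations $\phi \mapsto \phi f$ and $\tau \mapsto \tau_f$. For mappings $\alpha,\beta : E(K) \to M$ I write $\langle \alpha,\beta\rangle = \sum_{e \in E(K)} \alpha(e)\beta(e)$ (the orientation carried by the digraph $K$ fixes the signs). In this language the orthogonality granted before the lemma says that a mapping $\psi$ is an $M$-flow exactly when $\langle \psi, t\rangle = 0$ for every $M$-tension $t$, and is an $M$-tension exactly when $\langle \psi, \phi\rangle = 0$ for every $M$-flow $\phi$.

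First I would establish the adjunction identity: for every $\tau : E(G) \to M$ and every $\phi : E(H) \to M$,
\[
  \langle \tau_f, \phi\rangle = \langle \tau, \phi f\rangle .
\]
This is immediate on substituting the definition of $\tau_f$ and interchanging the summation over $E(H)$ with the summation over the fibres $f^{-1}(e)$: each $e' \in E(G)$ contributes exactly the one term $\tau(e')\,\phi(f(e'))$ to both sides. With this in hand the main equivalence is a chain of rewritings. By definition $f$ is $FF_M$ iff $\phi f$ is an $M$-flow on $G$ for every $M$-flow $\phi$ on $H$; by orthogonality on $G$ this holds iff $\langle \phi f, \tau\rangle = 0$ for every $M$-tension $\tau$ on $G$ and every $M$-flow $\phi$ on $H$. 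Applying the adjunction identity turns this into $\langle \tau_f, \phi\rangle = 0$ for all such $\tau$ and $\phi$. Fixing $\tau$ and quantifying over all flows $\phi$ on $H$, orthogonality on $H$ says this inner condition means precisely that $\tau_f$ is an $M$-tension on $H$; so the whole statement reads ``$\tau_f$ is an $M$-tension on $H$ for every $M$-tension $\tau$ on $G$'', which is the claimed condition.

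For the final (``moreover'') claim I would use linearity. The map $\tau \mapsto \tau_f$ is a homomorphism of $M$-modules and the $M$-tensions on $H$ form a submodule, so $T = \{\tau : \tau_f \text{ is an } M\text{-tension on } H\}$ is a submodule of the module of $M$-tensions on $G$. Every $M$-tension on $G$ is a coboundary $\delta p$ of a potential $p : V(G) \to M$, and writing $p = \sum_{v} p(v)\,\mathbf{1}_v$ exhibits it as an $M$-combination of the elementary tensions $\delta \mathbf{1}_v$, each supported only on the edges incident to the single vertex $v$ (taking value $+1$ on edges entering $v$ and $-1$ on edges leaving it). Thus the elementary tensions generate the tension module, and if the condition $\tau_f \in \mathrm{Tensions}(H)$ is checked for every $\delta\mathbf{1}_v$, then $T$ contains a generating set, forcing $T = \mathrm{Tensions}(G)$ and hence the condition for all tensions.

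The argument is short once the orthogonality is available, so the only genuinely delicate points are those where we work over a ring $M$ rather than a field. The first, that flows and tensions are orthogonal complements, is exactly the hypothesis supplied by the discussion preceding the lemma, so I would simply invoke it. The second is that the elementary tensions $\delta\mathbf{1}_v$ span the tension module over such an $M$; this I would justify by noting that $\{\mathbf{1}_v\}$ generates the potential module $M^{V(G)}$ and that $\delta$ maps it onto the tension module by definition, so no field structure is needed. These two observations are where I expect to spend the care, the rest being formal manipulation of the adjunction.
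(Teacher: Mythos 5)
Your argument is correct. Note, though, that the paper itself does not prove this lemma: it is imported verbatim from DeVos, Ne\v{s}et\v{r}il, and Raspaud (cited as Theorem~3.1 of that paper), with the orthogonality of the flow and tension modules stated as the key input. Your proof is exactly the natural duality argument one would expect behind that citation: the adjunction $\langle \tau_f,\phi\rangle=\langle\tau,\phi f\rangle$ is verified correctly by reindexing the double sum over fibres, and the main equivalence then follows by using \emph{both} directions of the orthogonal-complement statement (flows $=$ tensions$^\perp$ on $G$, tensions $=$ flows$^\perp$ on $H$), which is precisely what the paper supplies as a hypothesis. The ``moreover'' part is also handled correctly: $\tau\mapsto\tau_f$ is $M$-linear, the tensions on $H$ form a submodule, and every tension on $G$ is a coboundary $\delta p$ of a potential (true over any abelian group, by integrating along paths within each component), hence an $M$-combination of the vertex tensions $\delta\mathbf{1}_v$, which are exactly the tensions supported on the neighborhood of a single vertex. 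The only points requiring care --- that the complement statement holds over a ring rather than a field, and that the vertex tensions generate --- are the ones you flag explicitly, and both are either assumed by the paper or standard. I see no gap.
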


As an easy corollary of this lemma, we observe that $FF_2$-mappings between cubic 
bridgeless graphs map a 3-edge-cut to a 3-edge-cut. In particular, if the target 
graph is cyclically 4-edge-connected, then the image of an elementary cut (all edges 
around a vertex) is an elementary cut. 

\section{Influence of the group}
\label{sec:group}

In this section we study how the notion of $M$-flow-continuous 
mapping depends on the group~$M$. 
Although the existence of $M$-flow-continuous mappings seems
to be strongly dependent on the choice of~$M$ we prove here 
(in Theorem~\ref{numberofFF}) that this dependence
relates only to the largest order of an element of~$M$. 

As we are interested only in finite digraphs, we can restrict our
attention to finitely generated groups---clearly $f$ is
$M$-flow-continuous if and only if it is $N$-flow-continuous for
every finitely generated subgroup~$N$ of~$M$.
Hence, there are integers
$\alpha$, $k$, $\beta_i$, $n_i$ ($i = 1, \dots, k$)
so that
\begin{equation}\label{eq:M}
  M \simeq 
     \zet^\alpha \times \prod_{i=1}^k \zet_{n_i}^{\beta_i} \,.
\end{equation}
Note that each such group has a canonical ring structure, thus we will 
be able to use Lemma~\ref{altdef}. 

For a group $M$ in the form~\eqref{eq:M}, let $n(M) = \infty$ if
$\alpha > 0$, otherwise let $n(M)$ be the least common multiple of 
$\{n_1, \dots, n_k\}$. When $n(M)$ is finite, it is called the 
\emph{exponent} of the group~$M$. 
An alternative definition is that 
$n(M)$ is the largest order of an element of~$M$ (here 
order of $a \in M$ is the smallest $n > 0$ such that 
$n\cdot a = a+a+ \cdots + a = 0$). 

As a first step to a complete characterization we consider a  
specialized question: given a $FF_M$~mapping, when can we conclude
that it is $FF_N$ as well?

\begin {lemma}   \label{MN}
\begin {enumerate}
  \item If $f$ is $FF_\zet$ then it is $FF_M$ for any abelian group~$M$.
  \item Let $M$ be a subgroup of abelian group~$N$. If $f$ is $FF_N$ then it
    is $FF_M$.
\end {enumerate}
\end {lemma}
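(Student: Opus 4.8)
The plan is to treat the two parts separately, beginning with the easier second part. For part~2, I would observe that being an $M$-flow is a purely local additive condition (Kirchhoff's law at each vertex), involving only sums and differences of edge values. Hence, if $M \le N$ and $\phi$ is a map into~$H$ all of whose values lie in~$M$, then $\phi$ satisfies Kirchhoff's law computed in~$M$ if and only if it does when computed in~$N$; in other words, an $M$-flow on~$H$ is exactly an $N$-flow on~$H$ whose values happen to lie in~$M$. So I would take any $M$-flow $\phi$ on~$H$, regard it as an $N$-flow, apply the hypothesis that $f$ is $FF_N$ to conclude that $\phi f$ is an $N$-flow on~$G$, and finally note that all values of $\phi f$ lie in~$M$, so $\phi f$ is in fact an $M$-flow. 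This yields that $f$ is $FF_M$.

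For part~1, I would first reduce to cyclic groups using the product structure~\eqref{eq:M}. A map $\phi: E(H) \to A \times B$ is the pair of its coordinate maps $(\phi_A, \phi_B)$, and Kirchhoff's law holds for~$\phi$ if and only if it holds for each coordinate; moreover this splitting is respected by composition with~$f$, since $(\phi f)_A = \phi_A f$. Consequently $f$ is $FF_{A \times B}$ if and only if it is both $FF_A$ and $FF_B$, and iterating over the factors in~\eqref{eq:M} shows that $f$ is $FF_M$ as soon as it is $FF_\zet$ and $FF_{\zet_{n_i}}$ for each~$i$. Since $FF_\zet$ is the hypothesis, the whole statement reduces to the single implication $FF_\zet \Rightarrow FF_{\zet_n}$ for an arbitrary modulus~$n$.

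To prove that implication, I would lift modular flows to integral ones. Given a $\zet_n$-flow $\psi$ on~$H$, the plan is to produce a $\zet$-flow $\tilde\psi$ on~$H$ with $\tilde\psi(e) \equiv \psi(e) \pmod n$ for every edge. Granting such a lift, the argument closes quickly: $FF_\zet$ gives that $\tilde\psi f$ is a $\zet$-flow on~$G$; reduction modulo~$n$ is a ring homomorphism $\zet \to \zet_n$ and therefore sends $\zet$-flows to $\zet_n$-flows; and reduction commutes with composition, so $(\tilde\psi f) \bmod n = (\tilde\psi \bmod n) f = \psi f$. Hence $\psi f$ is a $\zet_n$-flow, as required.

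The one step that needs genuine justification---and the main obstacle---is the existence of the integral lift~$\tilde\psi$. I would obtain it from an integral cycle basis: fixing a spanning forest of~$H$, the fundamental circuits of the non-forest edges form simultaneously a $\zet$-basis of the lattice of integer flows and a $\zet_n$-basis of the space of $\zet_n$-flows. Writing $\psi$ in this basis with coefficients in~$\zet_n$, lifting each coefficient arbitrarily to~$\zet$, and taking the corresponding integer combination of fundamental circuits yields a genuine $\zet$-flow whose reduction modulo~$n$ is~$\psi$. Equivalently, this is the surjectivity of the reduction map from the integer flow lattice onto the $\zet_n$-flow space, which is exactly what makes the lift possible.
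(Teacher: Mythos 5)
Your proof of part~2 coincides with the paper's: take an $M$-flow on~$H$, regard it as an $N$-flow, apply the $FF_N$ hypothesis, and observe that the resulting flow on~$G$ takes values in~$M$ and hence is an $M$-flow. For part~1 you genuinely diverge, because the paper offers no argument at all there --- it simply cites Theorem~4.4 of~\cite{DNR}. Your self-contained replacement is correct. The splitting of $FF_{A\times B}$ into $FF_A$ and $FF_B$ is sound (it is in effect Lemma~\ref{M12} of the paper, proved the same way), and the remaining implication $FF_\Z \Rightarrow FF_{\Z_n}$ does rest entirely on the lifting step, which holds: on each component of~$H$ the fundamental circuits of a spanning tree freely generate the flows over any coefficient group, since a flow vanishing on all non-tree edges must vanish identically (Kirchhoff's law at a leaf of the tree forces its tree edge to carry~$0$; prune and repeat), so the reduction map from integer flows onto $\Z_n$-flows is surjective. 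Two remarks. First, your reduction via the decomposition~\eqref{eq:M} covers only finitely generated~$M$; for an arbitrary abelian group you should first pass to the finitely generated subgroup generated by the finitely many values of the given flow, exactly as the paper notes in the paragraph preceding~\eqref{eq:M}, after which your argument applies verbatim. Second, what the two routes buy: the paper's citation is free but opaque, while your argument makes the lemma self-contained and isolates the real content of part~1, namely that every $\Z_n$-flow on a finite digraph lifts to an integer flow --- the precise sense in which $\Z$ is universal among coefficient groups for flow-continuity.
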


\begin {proof}
1. This appears as Theorem~4.4 in~\cite{DNR}.

2. Let $\phi$ be an $M$-flow on~$H$. As $M \le N$, we may
regard $\phi$ as an $N$-flow, hence $\phi f$ is an $N$-flow
on~$G$. As it attains only values in the range of $\phi$, hence
in~$M$, it is an $M$-flow, too.
\qed
\end {proof}

\begin {lemma}   \label{M12}
Let $M_1$, $M_2$ be two abelian groups.
Mapping $f$ is $FF_{M_1}$ and $FF_{M_2}$ if and only if
it is $FF_{M_1 \times M_2}$.
\end {lemma}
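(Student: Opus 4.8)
The plan is to exploit the fact that a flow into a product group is nothing more than a pair of flows, one into each factor, and that this decomposition is respected by precomposition with~$f$. Accordingly I would handle the two implications separately, the forward one being immediate from the previous lemma and the reverse one resting on a single coordinatewise observation.

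For the forward implication I would simply invoke Lemma~\ref{MN}(2). Suppose $f$ is $FF_{M_1 \times M_2}$. The group $M_1$ embeds into $M_1 \times M_2$ as the subgroup $\{(m_1,0) : m_1 \in M_1\}$, and likewise $M_2$ embeds as $\{(0,m_2) : m_2 \in M_2\}$. Applying Lemma~\ref{MN}(2) with $N = M_1 \times M_2$ to each of these subgroups at once yields that $f$ is both $FF_{M_1}$ and $FF_{M_2}$, so no further work is needed here.

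For the reverse implication I would take an arbitrary $(M_1 \times M_2)$-flow $\phi$ on~$H$ and write it as $\phi = (\phi_1, \phi_2)$, where $\phi_i : E(H) \to M_i$ is the $i$-th coordinate mapping. Since addition in $M_1 \times M_2$ is componentwise, Kirchhoff's law at a vertex holds for $\phi$ exactly when it holds for $\phi_1$ and for $\phi_2$ separately; hence each $\phi_i$ is an $M_i$-flow on~$H$. By the hypotheses $\phi_1 f$ is an $M_1$-flow and $\phi_2 f$ is an $M_2$-flow, both on~$G$. The key bookkeeping point is that precomposition with~$f$ acts coordinatewise, that is $\phi f = (\phi_1 f, \phi_2 f)$, which is immediate from the definition of the algebraic composition. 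Invoking the componentwise form of Kirchhoff's law once more, $\phi f$ is then an $(M_1 \times M_2)$-flow on~$G$; as $\phi$ was arbitrary, $f$ is $FF_{M_1 \times M_2}$.

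I do not expect any genuine obstacle: the entire argument turns on the one observation that the group operation in a direct product is coordinatewise, so that both ``being a flow'' and ``being transported by $f$'' commute with the two projections. The only care required is to check that the projection of the composite equals the composite of the projection, and to note that the forward direction needs nothing beyond the already-established Lemma~\ref{MN}(2).
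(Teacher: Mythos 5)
Your proposal is correct and follows essentially the same route as the paper: the forward direction via Lemma~\ref{MN}(2) applied to the two coordinate embeddings, and the reverse direction by decomposing an $(M_1\times M_2)$-flow as $\phi=(\phi_1,\phi_2)$ and observing $\phi f=(\phi_1 f,\phi_2 f)$. The extra remarks about Kirchhoff's law holding componentwise just make explicit what the paper leaves implicit.
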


\begin {proof}
As $M_1$, $M_2$ are isomorphic to subgroups of $M_1 \times M_2$, one implication
follows from the second part of Lemma~\ref{MN}. For the other implication
let $\phi$ be an $(M_1 \times M_2)$-flow on~$H$. Write 
$\phi = (\phi_1, \phi_2)$, where $\phi_i$ is an $M_i$-flow on~$H$.
By assumption, $\phi_i f$ is an $M_i$ flow on~$G$, consequently
$\phi f = (\phi_1 f, \phi_2 f)$ is a flow too.
\qed
\end {proof}

The following (somewhat surprising) lemma shows that we can restrict
our attention to cyclic groups only.

\begin {lemma}   \label{onlycyclic}
\begin {enumerate}
  \item If $n(M) = \infty$ then $f$ is $FF_M$ if and only if it is $FF_\zet$.
  \item Otherwise $f$ is $FF_M$ if and only if it is $FF_{n(M)}$.
\end {enumerate}
\end {lemma}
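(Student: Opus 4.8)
The plan is to reduce both parts to the two structural lemmas already proved, exploiting the direct-product form~\eqref{eq:M} of~$M$ together with one standard fact from the theory of finite abelian groups.

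For Part~1 both implications are immediate. If $n(M) = \infty$ then $\alpha > 0$ in~\eqref{eq:M}, so $\Z$ is (isomorphic to) a subgroup of~$M$; hence $FF_M$ implies $FF_\Z$ by the second part of Lemma~\ref{MN}. Conversely, $FF_\Z$ implies $FF_M$ directly by the first part of Lemma~\ref{MN}. So I would dispatch this case in two sentences.

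For Part~2 we have $\alpha = 0$ and $M \simeq \prod_{i=1}^k \Z_{n_i}^{\beta_i}$ with $n(M) = \lcm(n_1, \dots, n_k)$. First I would observe, by iterating Lemma~\ref{M12}, that $f$ is $FF_M$ if and only if $f$ is $FF_{n_i}$ for every~$i$ (the multiplicities $\beta_i$ play no role, since $FF_{\Z_{n_i}^{\beta_i}}$ is equivalent to $FF_{n_i}$). The two directions then follow from Lemma~\ref{MN}(2) once the right subgroup relations among cyclic groups are in place. Since each $n_i$ divides $n(M)$, the cyclic group $\Z_{n(M)}$ contains a (unique) subgroup isomorphic to $\Z_{n_i}$; therefore $FF_{n(M)}$ implies $FF_{n_i}$ for every~$i$, and hence $FF_M$. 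This gives the ``$\Leftarrow$'' implication.

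The ``$\Rightarrow$'' implication rests on the key algebraic fact that a finite abelian group contains an element whose order equals its exponent: the group $M = \prod_i \Z_{n_i}^{\beta_i}$ has an element of order $\lcm(n_1, \dots, n_k) = n(M)$, and hence contains a cyclic subgroup isomorphic to $\Z_{n(M)}$. Applying Lemma~\ref{MN}(2) to this embedding, $FF_M$ implies $FF_{n(M)}$. I expect this embedding step to be the conceptual heart of the argument — it is precisely the reason that the exponent, rather than the full list of invariant factors, is the only relevant parameter, and it explains why the statement is ``somewhat surprising.'' Everything else is a mechanical combination of Lemmas~\ref{MN} and~\ref{M12}.
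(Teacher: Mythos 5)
Your proof is correct and follows essentially the same route as the paper: Part~1 via the two halves of Lemma~\ref{MN}, and Part~2 via the embedding $\Z_{n(M)} \le M$ (for ``$\Rightarrow$'') and the embeddings $\Z_{n_i} \le \Z_{n(M)}$ combined with repeated use of Lemma~\ref{M12} (for ``$\Leftarrow$''). The only difference is that you spell out the standard fact that a finite abelian group has an element of order equal to its exponent, which the paper takes for granted.
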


\begin {proof}
In the first part, each implication follows from one part of Lemma~\ref{MN}. 
In the second part: If $f$ is $FF_M$, we use the fact that 
$\zet_{n(M)}$ is isomorphic to a subgroup of~$M$, 
thus the second part of Lemma~\ref{MN} implies $f$ is~$FF_{n(M)}$. 
For the other implication, suppose that $f$ is $FF_{n(M)}$. 
Note that whenever $\Z_{n_i}$ occurs in the expression~\eqref{eq:M}
for~$M$, then $\Z_{n_i}$ is a subgroup of~$\Z_{n(M)}$. 
Consequently (Lemma~\ref{MN}, second part) $f$ is $FF_{n_i}$. 
Repeated application of~Lemma~\ref{M12} implies $f$ is $FF_M$ as well. 
\qed
\end {proof}

By a theorem of Tutte \cite{tutte1}, for a finite abelian group~$M$, 
the number of nowhere-zero $M$-flows on a given (di)graph only depends on the order of~$M$
(see also~\cite[Chapter~6]{Diestel}). 
Before proceeding in the main direction of this section, let us
note a consequence of Lemma~\ref{onlycyclic}, which is an analogue of Tutte's result. 

\begin {theorem}   \label{numberofFF}
Given digraphs~$G$, $H$, the number of $FF_M$ mappings from~$G$ to~$H$ 
depends only on~$n(M)$.
\end {theorem}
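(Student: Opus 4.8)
The plan is to reduce the statement about counting $FF_M$ mappings to a statement about counting $FF_{n(M)}$ mappings by invoking Lemma~\ref{onlycyclic}, which already tells us that the \emph{property} of being $FF_M$ coincides with the property of being $FF_{n(M)}$ (or $FF_\zet$ when $n(M)=\infty$). The key observation is that the theorem as stated is almost immediate once Lemma~\ref{onlycyclic} is in hand: for any two groups $M$ and $M'$ with $n(M)=n(M')=n$ (finite), a mapping $f$ is $FF_M$ if and only if it is $FF_n$ if and only if it is $FF_{M'}$, so the \emph{set} of $FF_M$ mappings from $G$ to $H$ equals the set of $FF_{M'}$ mappings, and in particular the two sets have the same cardinality. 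The case $n(M)=n(M')=\infty$ is handled identically, with $FF_\zet$ playing the role of the common reference property.

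So the first step I would carry out is to fix the value $n = n(M)$ and argue that the set $\{f : E(G)\to E(H) \mid f \text{ is } FF_M\}$ depends on $M$ only through $n$. Concretely, I would take two groups $M_1, M_2$ with $n(M_1)=n(M_2)$, split into the finite and infinite cases, and in each case apply Lemma~\ref{onlycyclic} twice: once to rewrite ``$f$ is $FF_{M_1}$'' as ``$f$ is $FF_n$'' (respectively $FF_\zet$), and once to rewrite ``$f$ is $FF_{M_2}$'' the same way. This shows the two defining conditions are logically equivalent for every individual $f$, hence the two solution sets coincide as sets, not merely in size.

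The only subtlety — and the reason the statement is a genuine (if short) corollary rather than a triviality — is that Lemma~\ref{onlycyclic} is stated for a single group $M$ of the canonical form~\eqref{eq:M}, so I would note that any finitely generated abelian group can be put in that form, and that the passage from an arbitrary abelian group $M$ to its finitely generated subgroups (as remarked in the text before~\eqref{eq:M}) does not change whether $f$ is $FF_M$. Thus I may assume without loss of generality that $M$ is already in the form~\eqref{eq:M} and that $n(M)$ is well-defined. I do not expect any real obstacle here; the work has all been front-loaded into Lemma~\ref{onlycyclic}, and the present theorem is the clean packaging of that reduction into a counting statement, deliberately phrased as an analogue of Tutte's theorem to emphasize the parallel.

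Finally I would remark, to make the analogy with Tutte's result sharp, that our conclusion is actually stronger than a mere equality of counts: the sets of flow-continuous mappings themselves coincide whenever the exponents agree, whereas Tutte's theorem for nowhere-zero flows asserts only equality of numbers. This is worth stating because it clarifies that the dependence on $n(M)$ is the \emph{only} dependence, which is precisely the message of the section.
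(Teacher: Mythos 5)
Your proposal is correct and matches the paper's intent exactly: the theorem is presented there as an immediate consequence of Lemma~\ref{onlycyclic} (with the reduction to finitely generated groups already handled before~\eqref{eq:M}), and your argument that the sets of $FF_M$ mappings themselves coincide whenever the exponents agree is precisely the reasoning the paper relies on.
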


Lemma~\ref{onlycyclic} suggests to define for two digraphs the set
$$
  FF(G,H) = \{ n \ge 1 \mid \hbox{there is $f: E(G) \to E(H)$ 
          such that $f$ is $FF_n$} \}
$$
and for a particular $f: E(G) \to E(H)$ 
$$
  FF(f,G,H) = \{ n \ge 1 \mid \hbox{$f$ is $FF_n$} \}  \,.
$$

We remark that most of these sets contain 1: $\zet_1$ is a trivial 
group, hence any mapping is $FF_1$. 
Therefore $1 \in FF(f,G,H)$ for every $f: E(G) \to E(H)$, 
while $1 \in FF(G,H)$ if and only if there exists a mapping $E(G) \to E(H)$.
This happens always, unless $E(H)$ is empty and $E(G)$ nonempty.

Although we are working with finite digraphs throughout the paper, 
in the following results we stress this---contrary to most
of the other results, these are not true for infinite digraphs.

\begin {lemma}   \label{FFf-fin}
Let $G$ be a finite digraph.
Either $FF(f,G,H)$ is finite or $FF(f,G,H)=\en$.
In the latter case $f$ is $FF_\zet$.
\end {lemma}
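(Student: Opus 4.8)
The plan is to use the tension characterisation of Lemma~\ref{altdef} to reduce membership in $FF(f,G,H)$ to a finite list of divisibility conditions on integers, after which the dichotomy is immediate.

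First I would record that over any abelian group the $M$-tensions on $G$ are exactly the $M$-coboundaries $\delta p=\sum_{v}p(v)\chi_v$, where $\chi_v:E(G)\to\zet$ is the elementary star tension at $v$ (equal to $+1$ on edges leaving $v$, $-1$ on edges entering $v$, and $0$ elsewhere). In particular the $\chi_v$ generate the tension module, and each $\chi_v$ is nonzero only on a neighbourhood of a single vertex, so it falls under Lemma~\ref{altdef}. Since the algebraic image $\tau\mapsto\tau_f$ is additive and satisfies $(a\tau)_f=a\,\tau_f$, Lemma~\ref{altdef} then says: $f$ is $FF_M$ if and only if $a\,(\chi_v)_f$ is an $M$-tension on $H$ for every vertex $v\in V(G)$ and every $a\in M$. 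Writing $\sigma_v:=(\chi_v)_f\in\zet^{E(H)}$ (a fixed integer vector, well defined because $G$ is finite), this becomes: $f$ is $FF_\zet$ iff each $\sigma_v$ is a $\zet$-tension on $H$, and $f$ is $FF_n$ iff each $\sigma_v\bmod n$ is a $\zet_n$-tension on $H$ (taking $a=1$ suffices, the other values following by linearity of the algebraic image).

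Next I would translate ``tension'' into circuit sums. For a circuit $C$ of $H$ let $c_{v,C}$ denote the signed sum of $\sigma_v$ along $C$. Then $\sigma_v$ is a $\zet$-tension iff $c_{v,C}=0$ for all $C$, and $\sigma_v\bmod n$ is a $\zet_n$-tension iff $c_{v,C}\equiv 0\pmod n$ for all $C$. As $H$ is finite it suffices to range $C$ over a finite generating set of its cycle space, so altogether we obtain finitely many integers $\{c_{v,C}\}$, and
$$
  FF(f,G,H)=\{\,n\ge 1 : n \mid c_{v,C}\ \hbox{for all }v,C\,\}=\{\,n\ge 1: n\mid d\,\},
$$
where $d=\gcd\{c_{v,C}\}$, with the convention that the gcd of a family of zeros is $0$ and that every $n$ divides $0$.

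The dichotomy now falls out. If some $c_{v,C}\ne 0$ then $d\ne0$ and every $n\in FF(f,G,H)$ satisfies $n\le|d|$, so the set is finite. Otherwise all $c_{v,C}=0$, whence $d=0$, every $n$ divides $d$, and $FF(f,G,H)=\en$; moreover in this case each $\sigma_v$ is a $\zet$-tension, so by the characterisation above $f$ is $FF_\zet$, as required. The step that needs the most care is the reduction in the second paragraph: that the elementary star tensions generate all tensions over every $M$, and that reduction mod $n$ commutes with the algebraic image. The finiteness of $G$ (and of $H$) is exactly what guarantees that the $\sigma_v$ are genuine finite integer vectors and that the family $\{c_{v,C}\}$ is finite, which is where the hypothesis is used and why the statement can fail for infinite digraphs.
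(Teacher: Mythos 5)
Your proof is correct, but it takes a genuinely different route from the paper's. The paper argues directly with flows: given that $f$ is $FF_n$ for infinitely many $n$, it takes a $\zet$-flow $\phi$ on $H$, observes that $\phi f \bmod n$ is a $\zet_n$-flow for each such $n$, and concludes that the signed sum of $\phi f$ around any vertex of $G$ --- a fixed integer, since $G$ is finite --- vanishes because it is divisible by infinitely many $n$; hence $f$ is $FF_\zet$, and Lemma~\ref{MN} then gives $FF(f,G,H)=\en$. You instead dualize via Lemma~\ref{altdef}, reduce to the elementary star tensions $\chi_v$ (which do generate all $M$-tensions on a finite digraph, for any abelian $M$), and encode membership of $n$ in $FF(f,G,H)$ as the finitely many congruences $c_{v,C}\equiv 0 \pmod n$. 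This needs more machinery (the ring hypothesis in Lemma~\ref{altdef} and the coboundary description of tensions), but it buys a sharper conclusion: $FF(f,G,H)$ is exactly the set of divisors of the single integer $d=\gcd\{c_{v,C}\}$, which is precisely the ``1 implies 2'' direction of Theorem~\ref{FFf-str} --- a fact the paper derives separately from the closure of $FF(f,G,H)$ under divisors and least common multiples. Your steps all check out: reduction mod $n$ commutes with the algebraic image, and a tension condition is preserved under scalar multiples, so testing $\chi_v$ itself (rather than all $a\chi_v$) indeed suffices. One minor remark: your appeal to the finiteness of $H$ is harmless given the paper's standing convention that all digraphs are finite, but strictly only the finiteness of $G$ matters --- the gcd of an infinite family $\{c_{v,C}\}$ would serve equally well, which matches the fact that the lemma's hypothesis singles out $G$.
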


\begin {proof}
It is enough to prove that $f$ is $FF_\zet$ if it is $FF_n$
for infinitely many integers~$n$. To this end, take a $\zet$-flow 
$\phi$ on~$H$. As $\phi_n : e \mapsto \phi(e) \bmod n$ is a
$\zet_n$-flow, $\phi_n f = \phi f \bmod n$ is a $\zet_n$-flow
whenever $f$~is $FF_n$.
To show $\phi f$ is a $\zet$-flow consider a vertex~$v$ of~$G$ and 
let $s$ be the ``$\pm$-sum'' (in~$\zet$) around~$v$: 
$$
s = \sum_{\hbox{$e$ leaves $v$}}(\phi f)(e) - \sum_{\hbox{$e$ enters $v$}} (\phi f)(e) \,. 
$$  
As $s \bmod n = 0$ for infinitely many values of~$n$, we have $s=0$.
\qed
\end {proof}

Any $f$ induced by a local isomorphism is $FF_\Z$, 
thus providing an example where
$FF(f,G,H)$ is the whole of~$\en$. For finite sets the situation is more
interesting. By the next theorem, the sets $FF(f,G,H)$
are precisely the ideals in the divisibility lattice.

\begin {theorem}   \label{FFf-str}
Let $S$ be a finite subset of\/ $\en$. Then the following are
equivalent.
\begin {enumerate}
  \item There are $G$, $H$, $f$ such that $S = FF(f, G, H)$. 
  \item There is $n \in \en$ such that $S$ is the set of 
    all divisors of~$n$.
\end {enumerate}
\end {theorem}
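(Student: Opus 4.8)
The plan is to prove the two implications separately. The direction $(2) \Rightarrow (1)$ is a one-line construction, whereas $(1) \Rightarrow (2)$ is where the lemmas of this section carry the argument.

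For $(2) \Rightarrow (1)$ I would, given $n$, exhibit a single explicit witness. Take $H$ to be one vertex with one loop $e_0$, take $G$ to have two vertices joined by $n$ parallel edges directed the same way, and let $f$ send every edge of $G$ to $e_0$. An $M$-flow on $H$ is an arbitrary value $a = \phi(e_0)$, since Kirchhoff's law at the loop is vacuous; hence $\phi f$ puts $a$ on all $n$ edges of $G$, and the balance condition at either endpoint reads $na = 0$. Thus $f$ is $FF_m$ exactly when $na \equiv 0$ for every $a \in \Z_m$, i.e.\ exactly when $m \mid n$, so $FF(f,G,H)$ is the set of divisors of~$n$.

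For $(1) \Rightarrow (2)$, set $S = FF(f,G,H)$ and establish two closure properties. Divisor-closure: if $a \in S$ and $c \mid a$ then $\Z_c \le \Z_a$, so the second part of Lemma~\ref{MN} gives $c \in S$. The key property is lcm-closure: if $a, b \in S$ then $f$ is $FF_a$ and $FF_b$, so by Lemma~\ref{M12} it is $FF_{\Z_a \times \Z_b}$; and since $n(\Z_a \times \Z_b) = \lcm(a,b)$, the second part of Lemma~\ref{onlycyclic} upgrades this to $f$ being $FF_{\lcm(a,b)}$, so $\lcm(a,b) \in S$. Now $S$ is nonempty (it always contains $1$) and finite by hypothesis, so putting $n = \lcm(S)$ and applying lcm-closure finitely often yields $n \in S$; divisor-closure then places every divisor of $n$ in $S$, while every element of $S$ divides $n$ by the definition of~$n$. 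Hence $S$ is exactly the set of divisors of~$n$.

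The step I expect to be the crux is the lcm-closure, since this is precisely what excludes arbitrary downward-closed sets such as $\{1,2,3\}$ and forces $S$ to be a full divisor lattice. It uses no combinatorics of its own but depends entirely on feeding Lemma~\ref{M12} into Lemma~\ref{onlycyclic}; the remainder of the argument is routine bookkeeping about divisibility.
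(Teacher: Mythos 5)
Your proof is correct and follows essentially the same route as the paper: the same witness $\orD_n \to L$ for $(2)\Rightarrow(1)$, and the same divisor-closure plus lcm-closure argument for $(1)\Rightarrow(2)$ (the paper derives lcm-closure directly from $\Z_{\lcm(a,b)} \le \Z_a \times \Z_b$ via Lemmas~\ref{MN} and~\ref{M12}, whereas you route it through Lemma~\ref{onlycyclic}, but these are equivalent). Taking $n=\lcm(S)$ rather than $\max S$ is an immaterial variation.
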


\begin {proof}
First we show that 1 implies 2.
The set $S=FF(f,G,H)$ has the following properties
\begin {enumerate}[(i)]
  \item If $a \in S$ and $b | a$ then $b \in S$. 
    (We use the second part of Lemma~\ref{MN}: if $b$ divides $a$, then
    $\zet_b \le \zet_a$.)
  \item If $a, b \in S$ then the least common multiple of $a$, $b$
     is in~$S$. 
    (We use Lemma~\ref{MN} and Lemma~\ref{M12}: if 
    $l = \lcm(a,b)$ then $\zet_l \le \zet_a \times \zet_b$.)
\end {enumerate}

Let $n$ be the maximum of~$S$. By (i), all divisors of~$n$ are in~$S$.
If there is a $k \in S$ that does not divide~$n$ then $\lcm(k,n)$
is an element of~$S$ larger than~$n$, a contradiction.

For the other implication, let $\orD_n$ be a graph with two vertices 
and $n$~parallel edges in the same direction, let $L$ be a loop
(digraph with a single vertex and one edge). 
Let $f$ be the only mapping from~$\orD_n$ to~$L$. 
Then $FF(f,\orD_n,L) = S$: mapping~$f$ is~$FF_k$ if and only if for any $a \in \zet_k$ the constant mapping
$E(\orD_n) \mapsto a$ is a $\zet_k$-flow; this occurs
precisely when $k$~divides~$n$. 
\qed
\end {proof}

Let us turn to describing the sets $FF(G,H)$.

\begin {lemma}   \label{FF-fin}
Let $G$, $H$ be \emph{finite} digraphs.
Either $FF(G,H)$ is finite or $FF(G,H)=\en$.
In the latter case $G \FF[\zet] H$.
\end {lemma}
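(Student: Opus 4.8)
The plan is to reduce the two-graph statement to the single-mapping dichotomy already established in Lemma~\ref{FFf-fin}, exploiting that only finitely many candidate mappings exist. By the very definition of $FF(G,H)$, an integer $n \ge 1$ lies in $FF(G,H)$ exactly when $n \in FF(f,G,H)$ for some $f: E(G) \to E(H)$, so
$$
  FF(G,H) = \bigcup_{f: E(G) \to E(H)} FF(f,G,H) \,.
$$
The crucial observation is that since $G$ and $H$ are finite, the edge sets $E(G)$ and $E(H)$ are finite, and hence there are only finitely many mappings $f: E(G) \to E(H)$; the union above is therefore a \emph{finite} union.

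Next I would apply Lemma~\ref{FFf-fin} to each term separately: every set $FF(f,G,H)$ is either finite or equal to~$\en$. A finite union of finite sets is finite, so if all terms are finite then $FF(G,H)$ is finite. Otherwise at least one term equals~$\en$; since $FF(G,H) \subseteq \en$ and already contains that term, we conclude $FF(G,H) = \en$. This yields the claimed dichotomy and, in particular, shows that $FF(G,H)$ being infinite forces $FF(G,H) = \en$. Finally, in this latter case I would fix a mapping $f_0$ witnessing $FF(f_0,G,H) = \en$; by the second assertion of Lemma~\ref{FFf-fin} such an~$f_0$ is $FF_\zet$, which by definition gives $G \FF[\zet] H$.

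I do not expect a genuine obstacle here: the whole content is the passage from the single-mapping result to the two-graph result, and this passage is purely the bookkeeping above. The one place where the hypothesis that $G$ (and $H$) is finite is indispensable is in guaranteeing that the defining union has finitely many terms — this is precisely why, as the paper stresses, the statement fails for infinite digraphs, where an infinite family of mappings could each contribute a distinct finite set and make $FF(G,H)$ infinite without equalling~$\en$.
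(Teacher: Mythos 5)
Your proof is correct and takes essentially the same approach as the paper: both reduce to Lemma~\ref{FFf-fin} by observing that, since $G$ and $H$ are finite, there are only finitely many mappings $E(G) \to E(H)$, so an infinite $FF(G,H)$ forces a single mapping to be $FF_n$ for infinitely many~$n$ and hence to be $FF_\zet$. Your phrasing via the finite union $FF(G,H) = \bigcup_f FF(f,G,H)$ is just a cleaner way of writing the paper's pigeonhole step.
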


\begin {proof}
As in the proof of~Lemma~\ref{FFf-fin}, the only difficult
step is to show that if $G \FF[n] H$ for infinitely many 
values of~$n$, then $G \FF[\zet] H$. As $G$~and~$H$ are finite, 
there is only a finite number of possible mappings between
their edge sets. Hence, there is one of them, say $f: E(G) \to E(H)$, 
that is $FF_n$ for infinitely many values of~$n$. By
Lemma~\ref{FFf-fin} we have $f: G \FF[\zet] H$.
\qed
\end {proof}

When characterizing the sets $FF(G, H)$ we first remark that
an analogue of Lem\-ma~\ref{M12} does not hold: there is a 
$FF_M$~mapping from~$\orD_9$ to~$\orD_7$ for
$M=\zet_2$ (mapping that identifies three edges and is 1--1 
on the other ones) and for $M=\zet_3$ (e.g., a constant mapping), 
but not the same mapping for both, hence there
is no $FF_{\zet_2\times\zet_3}$~mapping. We will see that
the sets $FF(G,H)$ are precisely the down-sets in the divisibility poset.
First, we prove a lemma that will help us to construct pairs of digraphs
$G$, $H$ with a given $FF(G,H)$.
The integer cone of a set $\{s_1, \dots, s_t\} \subseteq \en$ is
the set $\{ \sum_{i=1}^t a_i s_i \mid a_i \in \zet, a_i \ge 0 \}$.

\begin {lemma}   \label{circuits}
Let $A$, $B$ be non-empty subsets of~$\en$, $n \in \en$, define
$G = \bigcup_{a \in A} \orD_a$, and~$H = \bigcup_{b \in B} \orD_b$.
Then there is a $FF_n$~mapping from~$G$ to~$H$
if and only if 
$$
  \hbox {$A$ is a subset of the integer cone of $B \cup \{ n\}$} \,.
$$
\end {lemma}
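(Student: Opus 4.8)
The plan is to exploit the fact that both $G$ and $H$ are disjoint unions of the simple building blocks $\orD_a$, so that the whole question decomposes componentwise. The key observation is that a $\Z_n$-flow on $\orD_b$ is nothing but a $b$-tuple of elements of $\Z_n$ summing to $0$, and these tuples can be chosen independently on the different components of $H$. Writing $m_{a,e}$ for the number of edges of $\orD_a \subseteq G$ that $f$ sends to a fixed edge $e$ of $H$, the value of $\phi f$ summed around one side of $\orD_a$ equals $\sum_{e \in E(H)} m_{a,e}\,\phi(e)$. Thus $f$ is $FF_n$ exactly when, for every $a \in A$ and every $\Z_n$-flow $\phi$ on $H$, this sum vanishes in $\Z_n$. (Equivalently, one may run the same argument through Lemma~\ref{altdef}, using that the $\Z_n$-tensions on $\orD_a$ are exactly the constant maps and that $\tau_f$ must be constant on each $\orD_b$.)

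First I would localise this condition. Since the flow on $H$ may be taken supported on a single component $\orD_b$ and equal to an arbitrary zero-sum tuple there, the contribution of each component must vanish on its own. A short computation over $\Z_n$ then shows that $\sum_{e \in E(\orD_b)} m_{a,e}\,\phi(e) = 0$ for every zero-sum tuple $(\phi(e))$ if and only if the integers $m_{a,e}$, as $e$ ranges over the $b$ edges of $\orD_b$, are all congruent modulo $n$. Hence $f$ is $FF_n$ precisely when, for each $a \in A$ and each $b \in B$, the multiplicities of $f$ on the edges of $\orD_b$ are constant modulo $n$.

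For the forward implication I would read off a cone representation directly from such an $f$. Fix $a \in A$ and, for each $b \in B$, let $r_{a,b} \in \{0, \dots, n-1\}$ be the common residue of the multiplicities on $\orD_b$; then each $m_{a,e} = r_{a,b} + n t_e$ with $t_e \ge 0$. Summing over all edges of $H$ gives $a = \sum_{e} m_{a,e} = \sum_{b \in B} r_{a,b}\,b + n\mu$ with $\mu \ge 0$, exhibiting $a$ as a member of the integer cone of $B \cup \{n\}$. For the converse I would build $f$ from a given representation $a = \sum_{b \in B}\lambda_b b + \mu n$ (with $\lambda_b, \mu \ge 0$): put multiplicity $\lambda_b$ on every edge of each $\orD_b$, which uses $\sum_b \lambda_b b$ edges of $\orD_a$, and route the remaining $\mu n$ edges all to one fixed target edge $e_0$ (here I use $B \ne \emptyset$). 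Adding a multiple of $n$ to a single multiplicity preserves the residue within its component, so the resulting $f$ meets the characterisation above and is therefore $FF_n$.

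The routine parts are the $\Z_n$-linear-algebra ``iff'' and the bookkeeping of multiplicities. The main point that needs care is the forward direction: choosing the residues $r_{a,b}$ in $\{0,\dots,n-1\}$ so that the leftover multiplicities $m_{a,e} - r_{a,b}$ are genuinely nonnegative multiples of $n$, which is exactly what forces the coefficients in the cone representation to be nonnegative. I expect no serious obstacle beyond this, since the componentwise independence of flows cleanly separates the problem into one identity per pair $(a,b)$.
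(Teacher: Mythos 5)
Your proof is correct and follows essentially the same route as the paper's: the paper phrases the key step via Lemma~\ref{altdef} (the algebraic image of the tension that is constant~$1$ on~$\orD_a$ must be a tension on~$H$ modulo~$n$, i.e.\ constant on each~$\orD_b$ modulo~$n$), which is exactly your condition that the multiplicities $m_{a,e}$ be constant modulo~$n$ on each component, and your converse construction is identical to the paper's. Your write-up is in fact more explicit than the paper's terse argument about why taking least nonnegative residues yields genuinely nonnegative cone coefficients.
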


\begin {proof}
We use Lemma~\ref{altdef}.
Consider a tension $\tau_a$ taking the value 1 on $\orD_a$ and 0
elsewhere. The algebraic image of this tension is a tension, hence
it is (modulo $n$) a sum of several tensions on the digons 
$\orD_b$, implying $a$~is in integer cone of~$B \cup \{n\}$.
On the other hand if $a = \sum_i b_i + cn$ (with $b_i \in B$)
then we can map any $cn$ edges of~$\orD_a$ to one (arbitrary) edge of~$H$, 
and for each $i$ any (``unused'') $b_i$ edges bijectively to
$\orD_{b_i}$. After we have done this for each $a \in A$ we will have
constructed a $\zet_n$-flow-continuous mapping from~$G$ to~$H$.
\qed
\end {proof}

\begin {theorem}   \label{FF-str}
Let $S$ be a finite subset of~$\en$. Then the following are
equivalent.
\begin {enumerate}
  \item There are $G$, $H$  such that $S = FF(G, H)$. 
  \item There is a finite set $T \subset \en$ such that 
  $$
      S = \{ s \in \en ; (\exists t \in T) \quad s | t \} \,.
  $$
\end {enumerate}
\end {theorem}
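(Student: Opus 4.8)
The plan is to prove the two implications separately: $1 \Rightarrow 2$ is a short structural observation, while $2 \Rightarrow 1$ needs an explicit construction built on Lemma~\ref{circuits}. Throughout I would write $D(t) = \{s \in \en : s \mid t\}$ for the divisor-closure of $t$, so that condition~(2) asserts $S = \bigcup_{t\in T} D(t)$, i.e.\ that $S$ is a finite down-set in the divisibility order.

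For $1 \Rightarrow 2$ I would first note that $S = FF(G,H)$ is a down-set. Indeed, if $a \in S$ is witnessed by a map $f$ that is $FF_a$ and $b \mid a$, then $\zet_b \le \zet_a$, so by the second part of Lemma~\ref{MN} the same $f$ is $FF_b$, giving $b \in S$; this is exactly property~(i) from the proof of Theorem~\ref{FFf-str}. Since $S$ is assumed finite, it equals the union of $D(t)$ over its finitely many maximal elements, so taking $T$ to be this antichain of maximal elements yields~(2). In contrast with Theorem~\ref{FFf-str} we do \emph{not} get closure under $\lcm$ (the $\orD_9 \to \orD_7$ example shows this fails), which is precisely why the answer is an arbitrary down-set rather than a single principal ideal.

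For $2 \Rightarrow 1$, given $T = \{t_1, \dots, t_m\}$ (which I may assume nonempty and an antichain), I would set $L = \lcm(T)$ and choose a prime $M$ with $M > L + \max_i t_i$ (in particular $M > 2\max_i t_i$). Define $G = \orD_M \cup \orD_L$ and $H = \bigcup_{i=1}^m \orD_{M - t_i}$, and apply Lemma~\ref{circuits} with $A = \{M, L\}$ and $B = \{M - t_1, \dots, M - t_m\}$, so that $FF(G,H) = FF(\orD_M, H) \cap FF(\orD_L, H)$. Making each generator $M - t_i$ exceed $M/2$ ensures that any sum of two or more of them exceeds $M$; hence the only elements of the integer cone of $B$ that are at most $M$ are $0$ and the single generators $M - t_i$. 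Reading off Lemma~\ref{circuits} then gives $FF(\orD_M, H) = D(M) \cup \bigcup_i D(t_i)$, which, since $M$ is prime with $M \notin T$, equals $\bigcup_i D(t_i) \cup \{M\}$. Because $M - t_i > L$ for every $i$, no generator is $\le L$, so the cone contributes only $0$ below $L$ and $FF(\orD_L, H) = D(L)$. As each $t_i \mid L$ we have $\bigcup_i D(t_i) \subseteq D(L)$, while $M \nmid L$ gives $M \notin D(L)$; intersecting, $FF(G,H) = \big(\bigcup_i D(t_i) \cup \{M\}\big) \cap D(L) = \bigcup_i D(t_i) = S$, as required.

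The main obstacle is the control of the integer cone in the construction. A single digon $\orD_M$ cannot by itself realize a general down-set: forcing some $t_i$ into $FF(\orD_M, H)$ forces $M - t_i$ into the cone of $B$, and since such cones are closed under addition they tend to also contain $M$ (which would collapse $FF(\orD_M,H)$ to all of $\en$) or other spurious values. The two devices that tame this are (a)~choosing the generators larger than $M/2$, which kills every multi-generator sum below $M$ and isolates the values $t_i$, and (b)~taking $M$ prime and far above $L$, so that the unavoidable defect $M$ (arising from the empty combination $c=0$) is the \emph{only} spurious element and can be removed by intersecting with the single extra digon $\orD_L$ without disturbing $\bigcup_i D(t_i)$. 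Checking that no other sum of generators slips in below $M$ or below $L$ is the one routine computation that must be done carefully.
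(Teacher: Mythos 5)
Your construction is correct, and while it follows the paper's overall strategy --- the same down-set argument for $1\Rightarrow 2$, and for $2\Rightarrow 1$ an application of Lemma~\ref{circuits} to a domain consisting of two digons, one of prime size --- the construction itself is a genuine variant worth comparing. The paper takes $A=\{p,p'\}$ with $p$ prime, $p'$ an integer in $(1.25p,1.5p)$, and $B=\{p-t\}\cup\{p'-t\}$, so its $H$ has up to $2|T|$ digons and the verification is a joint case analysis of the cone representations of $p$ and $p'$; the spurious value $n=p$ is then killed by showing $p'$ admits no representation when $n=p$. You instead take the second domain digon to be $\orD_{L}$ with $L=\lcm(T)$ and keep only the generators $M-t_i$, so that $FF(\orD_L,H)$ collapses immediately to the divisors of $L$ (every generator exceeds $L$, so no generator can appear), and the intersection with $FF(\orD_M,H)=\{M\}\cup\bigcup_i\{n: n\mid t_i\}$ removes the spurious prime $M$ by divisibility ($M>L$ forces $M\nmid L$) rather than by a second size argument. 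This yields a smaller target graph and a cleaner, genuinely factored verification; your cone estimates ($M-t_i>M/2$ and $M-t_i>L$, both consequences of $M>L+\max T$) are the right ones and are checked correctly. The only point you pass over is the degenerate case $S=\emptyset$ (equivalently $T=\emptyset$), where no digon construction can work since $1\in FF(G,H)$ whenever a map $E(G)\to E(H)$ exists; as in the paper one must there take $E(H)=\emptyset\neq E(G)$. This is a one-line fix and does not affect the substance of your argument.
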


\begin {proof}
If $S$ is empty, we take $T$ empty in part~2. In part~1, 
we just consider digraphs such that $E(H)$ is empty and
$E(G)$ is not. Next, we suppose $S$ is nonempty.

By the same reasoning as in the proof of Theorem~\ref{FFf-str}
we see that if $a \in FF(G,H)$ and $b | a$ then $b \in FF(G,H)$.
Hence, 1 implies 2, as we can take $T = S$ (or, to make
$T$ smaller, let $T$ consist of the maximal elements of~$S$ in the
divisibility relation). 

For the other implication let $p > 4 \max T$ be a prime, 
let $p' \in (1.25p, 1.5p)$ be an integer. Let $A = \{p,p'\}$ and
$$
  B = \{ p-t; t \in T\} \cup \{ p'-t; t \in T\} \,;
$$
note that every element of~$B$ is larger than $\tfrac34 p$.
As in Lemma~\ref{circuits} we define
$G = \bigcup_{a \in A} \orD_a$, $H = \bigcup_{b \in B} \orD_b$.
We claim that $FF(G,H) = S$. By Lemma~\ref{circuits} it is immediate
that $FF(G,H) \supseteq S$. For the other direction take
$n \in FF(G,H)$. By Lemma~\ref{circuits} again, we can express
$p$ and~$p'$ in the form
\begin{equation}\label{eq:formofp}
    \sum_{i=1}^k b_i + cn
\end{equation}
for integers $c, k \ge 0$, and $b_i \in B$.
\begin {itemize}
  \item If $k\ge 2$ then the sum in~(\ref{eq:formofp}) is at least $1.5p$;
     hence neither $p$ nor $p'$ can be expressed with $k\ge 2$.
  \item If $k=1$ then we distinguish two cases.
    \begin {itemize}[$\bullet$]
      \item $p = (p-t) + cn$, hence $n$ divides $t$ and thus $n\in S$.
      \item $p = (p'-t) + cn$, hence $p'-p \le t$. But
        $p'-p > 0.25p > t$, a contradiction.
    \end {itemize}
    Considering $p'$ we find that either $n \in S$ or
    $p' = (p-t) + cn$.
  \item Finally, consider $k=0$. If $p=cn$ then either $n=1$ (so $n \in S$)  
   or $n=p$. (We don't claim anything about $p'$.)
\end {itemize}
To summarize, if $n \in FF(G,H) \setminus S$ then necessarily $n=p$.
For $p'$ we have only two possible expressions: $p'=cn$ (for $k=0$) and
$p' = (p-t) + cn$ (for $k=1$). We easily check that both of them lead to 
a contradiction. The first one contradicts $1.25 p < p' < 1.5 p$. 
In the second expression $c=0$ implies $p'<p$
while $c\ge 1$ implies $p'\ge 2p-t \ge 1.75p$, again a contradiction.
\qed
\end {proof}

\begin{remark}
This paper concentrates on $FF$~mappings. We remark, however, that 
analogous proofs describe the role of the group for mappings where preimages of tensions are tensions, 
or preimages of tensions are flows (or preimages of flows are tensions). 
For a discussion of the relevance of these types of mappings the reader may 
consult the series \cite{ns-tt1,ns-tt2} by the authors and 
the second author's Ph.D. thesis~\cite{rsthesis}. 
\end{remark}

\section{Cubic graphs}
\label{sec:cubic}

In the previous section we studied how the group~$M$ 
influences the notion of~$FF_M$-mappings; it turned out there is an 
algebraic structure behind this. In this section we look at the 
combinatorially more relevant case of cubic graphs. (Degree of each 
vertex is~3, the orientation is arbitrary.) Indeed, many 
longstanding conjectures in the area have been reduced to the case of 
cubic graphs. There it turns out that we only need to consider 
three groups: $\Z_2$, $\Z_3$, and $\Z$.

\begin{theorem} \label{maincubic}
Let $n > 3$ be an integer, suppose $G$, $H$ are digraphs with maximum degree 
less than~$n$. 
Then $G \FF[n] H$ is equivalent with $G \FF[\Z] H$. 
\end{theorem}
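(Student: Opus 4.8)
The plan is to prove the two implications separately; only the forward one uses the degree bound. The implication $G \FF[\Z] H \Rightarrow G \FF[n] H$ is immediate from the first part of Lemma~\ref{MN}: an $FF_\Z$ mapping is automatically $FF_M$ for every abelian group~$M$, in particular for $M=\Z_n$. This uses neither $n>3$ nor any restriction on degrees.

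For the converse I would fix a mapping $f\colon E(G)\to E(H)$ that is $FF_n$ and argue that the \emph{same}~$f$ is already $FF_\Z$, so that existence transfers at once. By Lemma~\ref{altdef} (applied with the ring~$\Z$) it suffices to check, for every $\Z$-tension $\tau$ on~$G$ that is supported on the neighbourhood of a single vertex~$v$, that its algebraic image $\tau_f$ is a $\Z$-tension on~$H$. Since both $\tau\mapsto\tau_f$ and the property of being a $\Z$-tension are $\Z$-linear, the set of admissible~$\tau$ is a subgroup, so I may restrict to a generating set of the single-vertex tensions: these are the coboundaries that take value $\pm1$ on the edges joining~$v$ to one connected component of $G-v$ and $0$ elsewhere. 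The structural point to record is that each such generator is supported on at most $\deg_G(v)<n$ edges and takes values in $\{-1,0,1\}$.

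The heart of the argument is a size-versus-congruence squeeze. Writing $\sigma=\tau_f$, the triangle inequality gives $\sum_{e\in E(H)}|\sigma(e)|\le\sum_{e'\in E(G)}|\tau(e')|\le\deg_G(v)<n$, so the total mass of~$\sigma$ is strictly below~$n$. On the other hand, reducing~$\tau$ modulo~$n$ yields a $\Z_n$-tension on~$G$, and since $f$ is $FF_n$ its image $\sigma\bmod n$ is a $\Z_n$-tension on~$H$; hence for every circuit~$C$ of~$H$ the signed sum $s_C$ (clockwise edges minus counterclockwise edges) satisfies $s_C\equiv0\pmod n$. But $|s_C|\le\sum_{e\in E(H)}|\sigma(e)|<n$, and $0$ is the only multiple of~$n$ in $(-n,n)$, so $s_C=0$ for every circuit. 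Thus $\sigma$ is a genuine $\Z$-tension, and $f$ is $FF_\Z$.

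I expect the only delicate point to be the reduction to unit-valued single-vertex tensions: when $G-v$ is disconnected the space of tensions supported on the star of~$v$ has more than one generator, and one must verify that each generator still lives on at most $\deg_G(v)$ edges with values in $\{-1,0,1\}$ so that the mass bound survives. Everything else is bookkeeping. It is also worth noting that the hypothesis enters only through $\deg_G(v)<n$, so the bound on the degrees of~$H$ is not actually needed; the clause $n>3$ merely guarantees the statement is non-vacuous in the cubic case $\Delta=3$, which is the application of interest.
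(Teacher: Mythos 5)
Your proof is correct, but it runs on the dual side of the argument the paper actually gives. The paper works directly with flows and never touches Lemma~\ref{altdef}: it decomposes an arbitrary $\Z$-flow on~$H$ into elementary circuit flows with values in $\{-1,0,1\}$, notes that each of these is simultaneously a $\Z_n$-flow, pulls it back along the $FF_n$ map~$f$, and then performs exactly your squeeze at each vertex $v$ of~$G$ --- the signed sum there has absolute value at most $\deg_G(v)<n$ and is divisible by~$n$, hence is zero --- to verify Kirchhoff's law over~$\Z$. You instead invoke Lemma~\ref{altdef}, decompose the star tensions at~$v$ into the component-cut generators, push forward, and squeeze the circuit sums in~$H$. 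Both are sound; the numerical heart (an integer of absolute value below $n$ that is $\equiv 0 \pmod n$ must vanish) is identical. What the paper's route buys is self-containedness: it needs no ring structure and no discussion of the components of $G-v$, since an elementary circuit flow is already a single generator with unit entries, whereas your reduction to unit-valued generators is genuinely the delicate step of your version (it does go through: a tension supported on the star of~$v$ comes from a potential constant on each component of $G-v$, so it is an integer combination of the cut tensions toward the individual components, each living on at most $\deg_G(v)$ edges with values in $\{-1,0,1\}$). Your closing observations also match the paper's proof, which likewise uses only the degree bound on~$G$ and nowhere uses $n>3$ beyond its role in the intended cubic application.
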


\begin{proof}
One direction follows from Lemma~\ref{MN}. For the other one, 
consider a mapping~$f: E(G) \to E(H)$. We will show that if it is 
$FF_n$, it is $FF_\Z$ as well. 
Taking a $\Z$-flow~$\phi$ on~$H$, we will show that $\phi f$ 
is a $\Z$-flow on~$G$. We only need to test this on elementary flows 
(those taking only values $\pm 1$ around a circuit), as these 
form a basis for $\Z$-flows. So suppose $\phi$~is one of these; 
notice that it is both a $\Z$-flow and a $\Z_n$-flow. 
Thus, $\phi f$ is a $\Z_n$-flow on~$G$. 
Consider a vertex~$v \in V(G)$ of degree~$d<n$ and let $e_1$, $e_2$, \dots, $e_d$ be the edges 
incident with it; further, let $a_i = \phi(f(e_i))$. As $\phi f$ is a $\Z_n$-flow, 
we have that $s = \pm a_1 \pm a_2 \pm \dots \pm a_d \equiv 0 \pmod n$ (the signs are chosen 
based on orientation of the edges). Now $|s| \le d < n$, thus $s=0$. 
It follows that $\phi f$ also satisfies the Kirchhoff's law at~$v$ in~$\Z$, 
thus $\phi f$ is also a flow over~$\Z$. 
\end{proof}

\begin{corollary}
Let $G$, $H$ be digraphs of maximum degree~$3$, let $n>3$ be an integer.  
Then $G \FF[n] H$ is equivalent with $G \FF[\Z] H$. 
\end{corollary}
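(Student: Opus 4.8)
The plan is to obtain this statement as an immediate instance of Theorem~\ref{maincubic}, so the whole argument reduces to checking that the hypothesis of that theorem is met. First I would verify the degree condition: since $n$ is an integer with $n > 3$, we have $n \ge 4$, and hence a digraph of maximum degree exactly $3$ has maximum degree strictly less than $n$. Thus both $G$ and $H$ satisfy the requirement ``maximum degree less than $n$'' demanded by Theorem~\ref{maincubic}.

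With this arithmetic in place, I would simply apply Theorem~\ref{maincubic} to the pair $G$, $H$ and read off that $G \FF[n] H$ is equivalent to $G \FF[\Z] H$. No further work is needed, because the theorem has already been established for the more permissive degree range. If one wished to make the easy direction explicit and self-contained, one could note that $G \FF[\Z] H \Rightarrow G \FF[n] H$ follows directly from the first part of Lemma~\ref{MN} (an $FF_\Z$ mapping is $FF_M$ for every abelian group~$M$, in particular for $M = \Z_n$); the content of the corollary is entirely in the reverse implication, which is exactly what Theorem~\ref{maincubic} supplies once the degree bound is confirmed.

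I do not expect any genuine obstacle here: the statement is a specialization of a result proved just above, and the only point requiring attention is the elementary observation that ``maximum degree $3$'' falls within the scope ``maximum degree less than $n$'' precisely because $n > 3$ forces $n \ge 4$. The substantive work—testing $\phi f$ on elementary flows, using that the $\pm$-sum around each vertex lies in $\{-d,\dots,d\}$ with $d < n$ and is $\equiv 0 \pmod n$, hence vanishes—was already carried out in the proof of Theorem~\ref{maincubic}, and need not be repeated.
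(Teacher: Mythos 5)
Your proposal is correct and matches the paper's (implicit) argument: the corollary is stated without a separate proof precisely because it is the instance of Theorem~\ref{maincubic} obtained by noting that maximum degree $3$ is less than $n$ whenever $n>3$. Your additional remark that the easy direction follows from Lemma~\ref{MN} is also consistent with how the theorem itself is proved.
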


Together with Lemma~\ref{onlycyclic}, the above corollary implies 
that for subcubic digraphs we only need to consider $\Z_2$-, $\Z_3$-, 
and $\Z$-flow-continuous mappings. 

By Lemma~\ref{MN} $\Z$-flow continuous mapping is also $\Z_2$- and $\Z_3$-flow-continuous. 
In the following examples we show that existence of $FF_2$ and $FF_3$ mappings are independent, 
even for subcubic digraphs. Let $f$~be any bijection from $E(\orD_3)$ to $E(\orC_3)$. 
Mapping~$f$ is $FF_n$ only if $n$~is a multiple of~$3$, thus 
it is $FF_3$ but not $FF_2$ nor $FF_\Z$. 
On the other hand, consider an edge 3-coloring for~$\orK_4$ (a $K_4$ with an arbitrary orientation of edges) 
as a mapping $g: \orK_4 \to \orD_3$. This mapping is~$FF_2$ (as a 4-cycle in~$K_4$ is also a cut). 
However, $g$~is not~$FF_3$: consider a $\Z_3$-flow $\phi$ in~$\orD_3$ that equals~$1$ on all three edges of~$\orD_3$.
Clearly the composition $\phi f$ is not a $\Z_3$-flow on~$\orK_4$.

\section*{Acknowledgments} 

The author is grateful to the anonymous referees for their valuable comments.


\bibliographystyle{rs-amsplain}
{
  \bibliography{FF-group} 
}

\end{document}